\numberwithin{equation}{section}
\numberwithin{figure}{section}
\theoremstyle{plain}
\newtheorem{thm}{\protect\theoremname}[section]
\theoremstyle{definition}
\newtheorem{defn}[thm]{\protect\definitionname}
\theoremstyle{plain}
\newtheorem{lem}[thm]{\protect\lemmaname}
\newenvironment{proof}[1][\protect\proofname]{\par
\normalfont\topsep6\p@\@plus6\p@\relax
\trivlist
\itemindent\parindent
\item[\hskip\labelsep\scshape #1]\ignorespaces
}{%
\endtrivlist\@endpefalse
}
\providecommand{\proofname}{Proof}
\theoremstyle{plain}
\newtheorem{cor}[thm]{\protect\corollaryname}
\theoremstyle{remark}
\newtheorem{rem}[thm]{\protect\remarkname}
\theoremstyle{plain}
\newtheorem{prop}[thm]{\protect\propositionname}
\numberwithin{equation}{section}
\date{}
\providecommand{\corollaryname}{Corollary}
\providecommand{\definitionname}{Definition}
\providecommand{\lemmaname}{Lemma}
\providecommand{\propositionname}{Proposition}
\providecommand{\remarkname}{Remark}
\providecommand{\theoremname}{Theorem}
\begin{document}

\title{Strong Unique Continuation for a Residual Stress System with Gevrey
Coefficients}

\author{Yi-Hsuan Lin}
\maketitle
\begin{abstract}
We consider the problem of the strong unique continuation for an elasticity
system with general residual stress. Due to the known counterexamples,
we assume the coefficients of the elasticity system are in the Gevrey
class of appropriate indices. The main tools are Carleman estimates
for product of two second order elliptic operators.
\end{abstract}
Keywords: strong unique continuation, Gevrey class, Carleman estimates

\section{Introduction and statement of the results}

In this paper, we prove the strong unique continuation property (SUCP)
for the isotropic elasticity system with residual stress under appropriate
conditions. We formulate the mathematical problem in the following.

Let $\Omega$ be a connected open domain in $\mathbb{R}^{3}$ and
consider the time-harmonic elasticity system 
\begin{equation}
\nabla\cdot\sigma+\kappa^{2}\rho u=0\mbox{ in }\Omega,\label{eq:1.1}
\end{equation}
where $\sigma=(\sigma_{ij})_{i,j=1}^{3}$ is the stress tensor field,
$\kappa\in\mathbb{R}$ is the frequency and $\rho=\rho(x)>0$ denotes
the density of the medium. The vector field $u(x)=(u_{i}(x))_{i=1}^{3}$
is the displacement vector. Suppose that the stress tensor is given
by 
\[
\sigma(x)=T(x)+(\nabla u)T(x)+\lambda(x)(\mathrm{tr}E)I+2\mu(x)E,
\]
where $E(x)=\dfrac{\nabla u+\nabla u^{t}}{2}$ is the infinitesimal
strain and $\lambda(x),\mu(x)$ are the Lam$\acute{\mathrm{e}}$ parameters.
The second-rank tensor $T(x)=(t_{ij}(x))_{i,j=1}^{3}$ is the residual
stress and satisfies 
\[
t_{ij}(x)=t_{ji}(x),\mbox{ }\forall i,j=1,2,3\mbox{ and }x\in\Omega
\]
 and 
\[
\nabla\cdot T=\sum_{j}\partial_{j}t_{ij}=0\mbox{ in }\Omega,\mbox{ }\forall i=1,2,3.
\]
If we define the elastic tensor $C=(C_{ijkl})_{i,j,k,l=1}^{3}$ with
\[
C_{ijkl}=\lambda\delta_{ij}\delta_{kl}+\mu(\delta_{jk}\delta_{jl}+\delta_{jk}\delta_{il})+t_{jl}\delta_{ik},
\]
then (\ref{eq:1.1}) is equivalent to 
\[
\nabla\cdot(C\nabla u)+\kappa^{2}\rho u=0\mbox{ in }\Omega.
\]

We concern the SUCP for (\ref{eq:1.1}), i.e., if $u\in H_{loc}^{2}(\Omega)$
satisfies (\ref{eq:1.1}) and $u(x)$ vanishes to infinite order at
a point $x_{0}\in\Omega$, then $u$ must vanish identically in $\Omega$.
Without loss of generality, we assume $x_{0}=0$. A brief history
of the results on the (strong) unique continuation for (\ref{eq:1.1})
is in the following. In \cite{nakamura2003unique}, Nakamura and Wang
proved the unique continuation property for (\ref{eq:1.1}) under
the condition $\max_{i,j}\|t_{ij}\|_{\infty}$ is small and $T(x),\lambda(x)$,
$\mu(x)\in W^{2,\infty}$ and $\rho(x)\in W^{1,\infty}$. In \cite{lin2004strong},
Lin proved the SUCP for (\ref{eq:1.1}) under the assumptions that
$T(0)=0$, $\max_{i,j}\|t_{ij}\|_{\infty}$ is small, $\lambda(x),\mu(x)$
and $\rho(x)$ are in $C^{2}$. In addition, in \cite{uhlmann2009unique},
Uhlmann and Wang proved unique continuation principle for (\ref{eq:1.1})
under the conditions $T(x),\lambda(x)$, $\mu(x)\in W^{2,\infty}$,
$\rho(x)\in W^{1,\infty}$ and general residual stress.

Motivated by \cite{uhlmann2009unique}, we want to prove the SUCP
for (\ref{eq:1.1}) with arbitrary residual stress. In this paper,
we will give a reduction algorithm to transform (\ref{eq:1.1}) into
a special fourth order elliptic system. The main difficulty is that
when $T(0)\neq0$, the leading terms of (\ref{eq:1.1}) will not be
the Laplacian at zero, so we cannot use a perturbation argument to
derive suitable Carleman estimates in order to obtain the SUCP. In
\cite{alinhac1980uniqueness}, Alinhac and Baouendi proved the SUCP
for any fourth order operator with smooth coefficients verifying $P=Q_{2}Q_{1}+R$,
where $Q_{i}$'s are second order elliptic operators with $Q_{i}(0,D)=-\Delta$
for $i=1,2$. Moreover, in \cite{le2001strong}, Le Borgne proved
the SUCP for fourth order differential inequality with $Q_{i}$'s
are Lipschitz continuous and $Q_{i}(0,D)=-\Delta$ for $i=1,2$. In
\cite{lin2004strong}, Lin introduced $v=\nabla\cdot u$ and $w=\nabla\times u$
to transform (\ref{eq:1.1}) into a second order differential system,
but the system is weakly-coupled, i.e., the principal part of the
second order derivatives are not diagonal. Moreover, Lin also introduced
a fourth order elliptic system $P=\Delta Q_{i}$ with $Q_{i}$'s are
second order elliptic operators with $Q_{i}(0,D)=\Delta$ for $i=1,2$
and give another approach to derive the SUCP. For more details, we
refer readers to \cite{lin2004strong}.

In this note, our transformation will reduce (\ref{eq:1.1}) into
a fourth order principally diagonal elliptic system with the same
leading coefficients. The key observation is that the leading terms
of the fourth order elliptic system are the same. Notice that principally
diagonal strongly elliptic systems allow the application of Carleman
estimates for scalar operators since these estimates are flexible
with respect to perturbations by lower order terms. Therefore, it
is possible to derive suitable Carleman estimates for the fourth order
elliptic system.

In general, the SUCP doe not hold even the coefficients are smooth,
Alinhac gave a counterexample in \cite{alinhac1980non}. Thus, we
consider all the coefficients in the Gevrey class and we will use
the Carleman estimates proved in \cite{colombini2010strong} for the
scalar higher order elliptic equations in order to prove the SUCP
for the new fourth order strongly elliptic system.
\begin{defn}
We say that $f\in C^{\infty}(\Omega)$ belongs to the Gevrey class
of order $s$, denote it as $G^{s}(\Omega)$, if there exist constants
$c$, $A$ and multiindices $\beta$ such that
\[
|\partial^{\beta}f|\leq cA^{|\beta|}|\beta|!^{s}\mbox{ in }\Omega.
\]
To simplify the notation, from now on, we use $G^{s}$ to denote $G^{s}(\Omega)$.
In this paper, we assume all the coefficients $T(x),\lambda(x),\mu(x)$
and $\rho(x)$ lie in the Gevrey class $G^{s}$. We are interested
in the SUCP for (\ref{eq:1.1}) with Gevery coefficients, which means
if $u$ satisfies (\ref{eq:1.1}) and $u$ is flat at the origin in
the sense that 
\begin{equation}
\sup_{r\leq\delta}r^{-N}\|u\|_{L^{2}(B(0,r))}<\infty\label{eq:1.2}
\end{equation}
for all $N$, then $u$ vanishes near the origin. If $u$ is smooth,
the condition (\ref{eq:1.2}) is equivalent to all partial derivatives
of $u$ vanishing at $0$.
\end{defn}
The SUCP for the second order elliptic equations in the Gevrey class
were studied in many literature \cite{colombini2005result,colombini2006strong,colombini2010strong,lerner1981resultats}.
In 1981, Lerner \cite{lerner1981resultats} considered a second order
elliptic operator $L$ in $\mathbb{R}^{2}$ with simple characteristics
and the coefficients in the Gevery class of order $s$. Lerner proved
that if $s$ is smaller than a quantity depending on the principal
symbol of $l_{2}(0,\mathbb{R}^{2})$, then $L$ has the SUCP near
$0$. In \cite{colombini2006strong}, the authors extended Lerner's
result to $\mathbb{R}^{N}$, which means the SUCP holds for a second
order elliptic operator $L$ in $\mathbb{R}^{N}$ with the Gevrey
order $s$ smaller than a quantity depending on the principal symbol
of $l_{2}(0,\mathbb{R}^{N})$.

Recall that the strongly elliptic condition is given as: there exists
$c_{0}>0$ such that for all vectors $a=(a_{i})_{i=1}^{3}$, $b=(b_{i})_{i=1}^{3}$
\[
\sum_{ijkl}C_{ijkl}a_{i}b_{j}a_{k}b_{l}\geq c_{0}|a|^{2}|b|^{2}\mbox{ }\forall x\in\Omega.
\]
In this paper, we assume $P_{1}$ and $P_{2}$ are two strongly elliptic
operators, where
\begin{eqnarray}
P_{1}(x,D) & := & \sum_{jk}a_{jk}^{1}(x)\partial_{x_{j}x_{k}}^{2}:=\sum_{jk}(\mu\delta_{jk}+t_{jk})\partial_{x_{j}x_{k}}^{2},\label{eq:1.3}\\
P_{2}(x,D) & := & \sum_{jk}a_{jk}^{2}(x)\partial_{x_{j}x_{k}}^{2}:=\sum_{jk}((\lambda+2\mu)\delta_{jk}+t_{jk})\partial_{x_{j}x_{k}}^{2}\label{eq:1.4}
\end{eqnarray}
 with $a_{jk}^{1}(x)=\mu(x)\delta_{jk}+t_{jk}(x)$ and $a_{jk}^{2}(x)=(\lambda(x)+2\mu(x))\delta_{jk}+t_{jk}(x)$.
Further, there exists $c_{0}>0$ such that for any $\xi=(\xi_{i})_{i=1}^{3}\in\mathbb{R}^{3}$
\begin{eqnarray}
\sum_{jk}a_{jk}^{1}(x)\xi_{j}\xi_{k} & = & \sum_{jk}t_{jk}\xi_{j}\xi_{k}+\mu|\xi|^{2}\geq c_{0}|\xi|^{2}\label{eq:1.5}\\
\sum_{jk}a_{jk}^{2}(x)\xi_{j}\xi_{k} & = & \sum_{jk}t_{jk}\xi_{j}\xi_{k}+(\lambda+2\mu)|\xi|^{2}\geq c_{0}|\xi|^{2}\label{eq:1.6}
\end{eqnarray}
for all $x\in\Omega$, note that $(a_{jk}^{\ell}(x))_{j,k=1}^{3}$
is a symmetric matrix for $\ell=1,2$. 

We also assume that there exists a constant $\alpha>0$ such that
the eigenvalues $\lambda_{1}^{\ell}\leq\lambda_{2}^{\ell}\leq\lambda_{3}^{\ell}$
to be eigenvalues of $(a_{jk}^{\ell}(0))$ satisfying
\begin{equation}
\alpha>\dfrac{\lambda_{3}^{\ell}-\lambda_{1}^{\ell}}{\lambda_{1}^{\ell}}\label{eq:1.7}
\end{equation}
and 
\begin{equation}
s<1+\dfrac{1}{\alpha}\label{eq:1.8}
\end{equation}
uniformly in $x$ and for $\ell=1,2$. 

The following theorem derives the SUCP for (\ref{eq:1.1}) when all
the coefficients lie in the Gevrey class $G^{s}$.
\begin{thm}
Let the residual stress $(t_{ij}(x))_{i,j=1}^{3}$, the Lam$\acute{e}$
parameters $\lambda(x)$, $\mu(x)$ and the density of the medium
$\rho(x)$ be in the Gevrey class $G^{s}(\Omega)$ with $s$ satisfying
(\ref{eq:1.8}). Then for all $u\in H_{loc}^{2}(\Omega;\mathbb{R}^{3})$
solving (\ref{eq:1.1}) and for all $N>0$
\[
\int_{R\leq|x|\leq2R}|u|^{2}dx=O(R^{N})\mbox{ as }R\to0,
\]
then $u$ is identically zero in $\Omega$.
\end{thm}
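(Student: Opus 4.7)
My plan is to reduce the first order elasticity system (\ref{eq:1.1}) to a principally diagonal fourth order elliptic system whose common leading part is the composition $P_{2}(x,D)P_{1}(x,D)$, and then to apply the scalar Carleman estimates of \cite{colombini2010strong} for higher order elliptic operators with Gevrey coefficients componentwise. The guiding idea, as emphasised in the introduction, is that once the leading part is the same scalar operator on each $u_{i}$, the lower order coupling between the components is a perturbation that can be absorbed by the large Carleman parameter.

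Expanding $\nabla\cdot\sigma$ in coordinates and using the symmetry $t_{ij}=t_{ji}$ together with the divergence free condition $\nabla\cdot T=0$, the $i$-th scalar equation of (\ref{eq:1.1}) takes the form
\[
P_{1}(x,D)u_{i}+(\lambda+\mu)\,\partial_{i}v+R^{(1)}_{i}(x,D)u=0,\qquad i=1,2,3,
\]
where $v:=\nabla\cdot u$ and $R^{(1)}_{i}$ is a first order operator with $G^{s}$ coefficients. Taking the divergence of this identity, commuting $\partial_{i}$ with $P_{1}$ (the commutator is of order two with $G^{s}$ coefficients), and using the algebraic coincidence $P_{1}(x,D)+(\lambda+\mu)\Delta=P_{2}(x,D)$ at the level of principal symbols, I would obtain a scalar equation $P_{2}(x,D)v+R^{(2)}(x,D)u=0$ with $R^{(2)}$ of order two. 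Applying $P_{2}(x,D)$ to the first displayed equation and substituting $P_{2}v$ from the second produces, for every $i=1,2,3$,
\[
P_{2}(x,D)P_{1}(x,D)u_{i}+Q_{i}(x,D)u=0,
\]
where $Q_{i}$ is of order at most three with $G^{s}$ coefficients. This is the announced principally diagonal fourth order elliptic system: the leading operator $P_{2}P_{1}$ is the same scalar operator acting on every $u_{i}$, and the coupling between components lives entirely in the subprincipal term $Q_{i}u$.

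At this point I would invoke the Carleman estimate of \cite{colombini2010strong} for the scalar composite $P_{2}P_{1}$. The hypotheses fit exactly: both $P_{1}(0,D)$ and $P_{2}(0,D)$ satisfy the eigenvalue pinching (\ref{eq:1.7}) uniformly, and the Gevrey index is in the admissible range (\ref{eq:1.8}). This delivers, for each scalar component, an inequality of the form
\[
\tau\sum_{|\beta|\leq 3}\int e^{2\tau\phi}|D^{\beta}u_{i}|^{2}\,dx\leq C\int e^{2\tau\phi}|P_{2}(x,D)P_{1}(x,D)u_{i}|^{2}\,dx,
\]
valid for $\tau$ large and for test functions supported in a small ball. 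Inserting the fourth order equation on the right hand side, summing over $i=1,2,3$ and using that $Q_{i}u$ is of order at most three with bounded Gevrey coefficients, one absorbs the right hand side into the left by choosing $\tau$ large enough. The flatness hypothesis of the theorem produces, via a standard cutoff argument, a compactly supported representative to which the estimate applies; letting $\tau\to\infty$ forces $u\equiv 0$ in a neighbourhood of $0$, and the connectedness of $\Omega$ propagates this to all of $\Omega$.

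The step I expect to be the main obstacle is the derivation of the scalar fourth order equation, where one must verify that no uncancelled fourth order off-diagonal term survives after applying $P_{2}$ and substituting the equation for $v$. This is the algebraic coincidence, specific to the isotropic setting with a divergence free residual stress, that makes the system principally diagonal and puts us in the framework of a single scalar operator. A secondary but routine point is to check that the coefficients of $P_{2}P_{1}$ and of $Q_{i}$ inherit the class $G^{s}$ from $\lambda$, $\mu$, $T$ and $\rho$, which is automatic because $G^{s}$ is closed under sums, products and differentiation.
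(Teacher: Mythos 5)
Your reduction is correct but follows a genuinely different (and cleaner) route than the paper. The paper sets $U=(u,v,w)^{t}$ with $v=\nabla\cdot u$, $w=\nabla\times u$ and builds a $7$-component fourth order system by applying $P_{2}$ to the $u$-equation, $P_{1}$ to the $v$-equation, and $P_{2}$ to the $w$-equation, after which it must spend considerable effort showing that the fourth order terms $P_{\ell}\bigl(\sum_{jk}\nabla(t_{jk})\cdot\partial_{jk}^{2}u\bigr)$ arising on the right of the $v$- and $w$-equations can be reduced to order $\leq 3$ by substituting $\Delta u=\nabla v-\nabla\times w$ and re-using (\ref{eq:2.2.}). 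You instead derive the scalar identity $P_{2}v=-R^{(2)}u$ (order two) from the divergence of the $u$-equation and the exact algebraic coincidence $P_{1}+(\lambda+\mu)\Delta=P_{2}$, and then apply $P_{2}$ only to the $u$-equation, substituting $P_{2}v$ afterwards; this eliminates $v$ and $w$ from the unknowns altogether and yields a $3$-component system $P_{2}P_{1}u_{i}+Q_{i}u=0$ with $Q_{i}$ of order $\leq 3$. Both approaches arrive at the same leading operator $P_{2}P_{1}$ perturbed by order-$3$ Gevrey terms, so the Carleman estimate of \cite{colombini2010strong} applies in either case, but your version avoids the auxiliary fourth-order equations for $v$ and $w$ and the associated bookkeeping.

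There is, however, a genuine gap in the second half of your argument: you invoke ``a standard cutoff argument'' to produce a compactly supported test function, but the Carleman weight here is $\phi=|x|^{-\alpha}$, which is \emph{singular at the origin}. The estimate in Proposition 4.2/Corollary 4.3 is stated for $V\in C^{\infty}(B_{r_{0}}\setminus\{0\})$, and even after cutting off away from $|x|=R$ the function $\chi u$ is not supported away from $0$; the integrals $\int e^{2\tau|x|^{-\alpha}}|D^{\beta}(\chi u)|^{2}\,dx$ are a priori infinite. The missing ingredient is the Gevrey interior regularity for the system (Proposition 2.13 of \cite{boutet1967pseudo}, restated as Lemma 5.3 here): since the coefficients lie in $G^{s}$, the solution $u$ (and hence $U$) is itself in $G^{s}$, and combining this with the flatness hypothesis via Lemma 5.1 gives the super-exponential decay $|u|\lesssim e^{-|x|^{-\gamma}}$ for some $\gamma>\alpha$ (this is exactly where the constraint $s<1+1/\alpha$ is used, since one needs $1/(s-1)>\alpha$). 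Only this decay makes the weighted integrals converge and justifies passing the Carleman estimate to $\chi u$ by a limiting argument with test functions vanishing near $0$. Without that step the proof does not close. A secondary imprecision: once $u\equiv 0$ near the origin, ``connectedness propagates this to all of $\Omega$'' is not by itself enough; the paper invokes the weak unique continuation result of \cite{uhlmann2009unique} for the residual stress system (or equivalently one must rerun the strong estimate centred at every point, using that (\ref{eq:1.7}) holds uniformly in $x$).
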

This paper is organized as follows. In section 2, we will reduce (\ref{eq:1.1})
into a fourth order principally diagonal elliptic system. We use the
ideas in \cite{lin2004strong} and give more detailed transformations.
In section 3, we will use the property of the strongly elliptic system
in the Gevrey class, then we can get the asymptotic behavior of $u$
near 0. In section 4, we state the SUCP for the fourth order elliptic
system and prove the theorem by using the Carleman estimates.

\section{Reduction to a fourth order strongly elliptic system}

In this section, we want to transform (\ref{eq:1.1}) into a principally
diagonal fourth order strongly elliptic system. As the calculation
in \cite{lin2004strong}. Let 
\begin{equation}
Ru=\nabla\cdot(\nabla uT)\label{eq:2.1}
\end{equation}
with $Ru=((Ru)_{1},(Ru)_{2},(Ru)_{3})$, where $(Ru)_{i}=\sum_{jk}t_{jk}\partial_{jk}^{2}u_{i}$,
$i=1,2,3$.

As in Section 2, we set $U=(u,v,w)^{t}$, where $v=\nabla\cdot u$,
$w=\nabla\times u$ and $u$ satisfies (\ref{eq:1.1}). From (\ref{eq:1.1}),
(\ref{eq:2.1}), let $P_{1}$ and $P_{2}$ be two elliptic operators
\begin{eqnarray*}
P_{1}(x,D) & = & R+\mu\Delta,\\
P_{2}(x,D) & = & R+(\lambda+2\mu)\Delta,
\end{eqnarray*}
 then $(u,v,w)$ satisfies
\begin{eqnarray}
P_{1}(x,D)u & = & A_{1,1}(u,v)+A_{1,0}(u,v),\label{eq:2.2.}\\
P_{2}(x,D)v & = & -\sum_{jk}\nabla(t_{jk})\cdot\partial_{jk}^{2}u\label{eq:2.3}\\
 &  & +A_{2,1}(u,v,w)+A_{2,0}(u,v,w),\nonumber \\
P_{1}(x,D)w & = & -\sum_{jk}\nabla(t_{jk})\times\partial_{jk}^{2}u\label{eq:2.4}\\
 &  & +A_{3,1}(u,v,w)+A_{3,2}(u,v,w),\nonumber 
\end{eqnarray}
 where $A_{\ell,m}$ are $m$-th order differential operators. For
more details, we refer reader to \cite{lin2004strong}. 

Notice that $u\in H_{loc}^{2}(\Omega;\mathbb{R}^{3})$ satisfies (\ref{eq:2.2.})
and $v=\nabla\cdot u\in H_{loc}^{1}(\Omega)$ and $\nabla v\in L_{loc}^{2}(\Omega)$,
then the right hand side of (\ref{eq:2.2.}) lies in $L_{loc}^{2}(\Omega)$.
Therefore, we use the standard elliptic higher order regularity theory
for (\ref{eq:2.2.}) (see Theorem 2.2 in \cite{giaquinta1993introduction})
and the strongly elliptic property, then we have $u\in H_{loc}^{3}(\Omega;\mathbb{R}^{3})$.
Iterate the procedures, we obtain $u\in H_{loc}^{k}(\Omega;\mathbb{R}^{3})$
$\forall k\in\mathbb{N}$ (which implies $v,w\in H_{loc}^{k}(\Omega)$
$\forall k\in\mathbb{N}$).

Let $P(x,D)$ be the principal part of the system to get 
\[
P(x,D)U=(P_{1}(x,D)u,P_{2}(x,D)v,P_{1}(x,D)w)^{t},
\]
where $U:=(u,v,w)^{t}:\Omega\to\mathbb{R}^{7}$. Component-wise, we
have
\begin{eqnarray*}
(P(x,D)U)_{i} & = & \mu\Delta u_{i}+\sum_{jk}t_{jk}\partial_{jk}^{2}u_{i},\mbox{ }i=1,2,3\\
(P(x,D)U)_{i} & = & (\lambda+2\mu)\Delta v+\sum_{jk}t_{jk}\partial_{jk}^{2}v,\mbox{ }i=4\\
(P(x,D)U)_{i} & = & \mu\Delta w_{i-4}+\sum_{jk}t_{jk}\partial_{jk}^{2}w_{i-4}\mbox{ }i=5,6,7.
\end{eqnarray*}
Now, let us take the second order elliptic operator $P_{2}(x,D)$
on (\ref{eq:2.2.}), we get 
\begin{eqnarray}
P_{2}P_{1}(x,D)u & = & P_{2}(x,D)[A_{1,1}(u,v)+A_{1,0}(u,v)]\label{eq:2.5}\\
 & := & \sum_{m=0}^{3}B_{1,m}(u,v),\nonumber 
\end{eqnarray}
where $B_{1,m}$ is an $m$-th order differential operator. Similarly,
we can take $P_{1}(x,D)$ on (\ref{eq:2.3}) and $P_{2}(x,D)$ on
(\ref{eq:2.4}), then we obtain 
\begin{align}
 & P_{1}P_{2}(x,D)v\label{eq:2.6}\\
 & =P_{1}(x,D)(-\sum_{jk}\nabla(t_{jk})\cdot\partial_{jk}^{2}u)+P_{1}(x,D)(A_{2,1}(u,v,w)+A_{2,0}(u,v,w))\nonumber \\
 & =-P_{1}(x,D)(\sum_{jk}\nabla(t_{jk})\cdot\partial_{jk}^{2}u)+\sum_{m=0}^{3}B_{2,m}(u,v,w),\nonumber 
\end{align}
and 
\begin{align}
 & P_{2}P_{1}(x,D)w\label{eq:2.7}\\
 & =P_{2}(x,D)(-\sum_{jk}\nabla(t_{jk})\times\partial_{jk}^{2}u)+P_{2}(x,D)(A(u,v,w)+A_{3,2}(u,v,w))\nonumber \\
 & =-P_{2}(x,D)(\sum_{jk}\nabla(t_{jk})\times\partial_{jk}^{2}u)+\sum_{m=0}^{3}B_{3,m}(u,v,w).\nonumber 
\end{align}
Now, if we interchange $P_{1}$, $P_{2}$ on (\ref{eq:2.6}), and
use 
\[
P_{2}P_{1}=P_{1}P_{2}-[P_{1,}P_{2}],
\]
where $[P_{1},P_{2}]$ is the commutator of two second order elliptic
operators, then $[P_{1},P_{2}]$ is a third order differential operator.
Thus, (\ref{eq:2.6}) becomes
\begin{eqnarray}
P_{2}P_{1}(x,D)v & = & -P_{1}(x,D)[\sum_{jk}\nabla(t_{jk})\cdot\partial_{jk}^{2}u]\label{eq:2.8}\\
 &  & +\sum_{m=0}^{3}\widetilde{B_{2,m}}(u,v,w),\nonumber 
\end{eqnarray}
where $\widetilde{B_{2,m}}$ is an $m$-th order differential operator
and 
\[
\sum_{m=0}^{3}\widetilde{B_{2,m}}(u,v,w)=\sum_{m=0}^{3}B_{2,m}(u,v,w)-[P_{1},P_{2}](x,D)v.
\]
Now, combine (\ref{eq:2.5}), (\ref{eq:2.7}) and (\ref{eq:2.8})
together, we have 
\begin{eqnarray}
P_{2}P_{1}(x,D)\left(\begin{array}{c}
u\\
v\\
w
\end{array}\right) & = & -\left(\begin{array}{c}
0\\
P_{1}(x,D)[\sum_{jk}\nabla(t_{jk})\cdot\partial_{jk}^{2}u]\\
P_{2}(x,D)[\sum_{jk}\nabla(t_{jk})\times\partial_{jk}^{2}u]
\end{array}\right)\label{eq:2.9}\\
 &  & +\sum_{m=0}^{3}\left(\begin{array}{c}
B_{1,m}(u,v)\\
\widetilde{B_{2,m}}(u,v,w)\\
B_{3,m}(u,v,w)
\end{array}\right).\nonumber 
\end{eqnarray}

Now, for $P_{1}\left(\sum_{jk}\nabla(t_{jk})\cdot\partial_{jk}^{2}u\right)$
in (\ref{eq:2.9}), recall that $P_{1}(x,D)=R+\mu\Delta$ and $Ru=\nabla\cdot(\nabla uT)$,
then we have 
\begin{equation}
P_{1}(x,D)[\sum_{jk}\nabla(t_{jk})\cdot\partial_{jk}^{2}u]=R(\sum_{jk}\nabla(t_{jk})\cdot\partial_{jk}^{2}u)+\mu\Delta(\sum_{jk}\nabla(t_{jk})\cdot\partial_{jk}^{2}u).\label{eq:2.10}
\end{equation}
 For the second term of (\ref{eq:2.10}), by using the vector identity
$\Delta u=\nabla(\nabla\cdot u)-\nabla\times\nabla\times u=\nabla v-\nabla\times w$,
it is easy to see 
\begin{align*}
 & \Delta(\sum_{jk}\nabla(t_{jk})\cdot\partial_{jk}^{2}u)\\
 & =\sum_{jk}\nabla(t_{jk})\cdot\partial_{jk}^{2}(\Delta u)+\widetilde{A_{2,3}}(u)+\widetilde{A_{2,2}}(u)\\
 & =\sum_{jk}\nabla(t_{jk})\cdot\partial_{jk}^{2}(\nabla v-\nabla\times w)+\widetilde{A_{2,3}}(u)+\widetilde{A_{2,2}}(u)\\
 & =\widetilde{B_{2,3}}(u,v,w)+\widetilde{A_{2,2}}(u),
\end{align*}
where $\widetilde{A_{2,m}}$ and $\widetilde{B_{2,m}}$ are $m$-th
order differential operators and 
\[
\widetilde{B_{2,3}}(u,v,w)=\sum_{jk}\nabla(t_{jk})\cdot\partial_{jk}^{2}(\nabla v-\nabla\times w)+\widetilde{A_{2,3}}(u).
\]
For the first term of (\ref{eq:2.10}), we have
\begin{align*}
 & R(\sum_{jk}\nabla(t_{jk})\cdot\partial_{jk}^{2}u)\\
 & =\sum_{\ell m}t_{\ell m}\partial_{\ell m}^{2}(\sum_{jk}\nabla(t_{jk})\cdot\partial_{jk}^{2}u))\\
 & =\sum_{jk}\nabla(t_{jk})\cdot[\sum_{\ell m}t_{\ell m}\partial_{\ell m}^{2}\partial_{jk}^{2}u]+\widetilde{C_{2,3}}(u)+\widetilde{C_{2,2}}(u)\\
 & =\sum_{jk}\nabla(t_{jk})\cdot\partial_{jk}^{2}(\sum_{\ell m}t_{\ell m}\partial_{\ell m}^{2}u)+\widetilde{D_{2,3}}(u)+\widetilde{D_{2,2}}(u)\\
 & =\sum_{jk}\nabla(t_{jk})\cdot Ru+\widetilde{D_{2,3}}(u)+\widetilde{D_{2,2}}(u),
\end{align*}
and use (\ref{eq:2.2.}), we have $Ru=-\mu\Delta u+A_{1,1,}(u,v)+A_{1,0}(u,v)$,
we have 

\begin{align*}
 & R(\sum_{jk}\nabla(t_{jk})\cdot\partial_{jk}^{2}u)\\
 & =\sum_{jk}\nabla(t_{jk})\cdot\partial_{jk}^{2}(-\mu\Delta u+A_{1,1}(u,v)+A_{1,0}(u,v))+\widetilde{D_{2,3}}(u)+\widetilde{D_{2,2}}(u)\\
 & =\sum_{jk}\nabla(t_{jk})\cdot\partial_{jk}^{2}(-\mu(\nabla v-\nabla\times w))+\sum_{m=0}^{3}\widetilde{E_{2,3}}(u,v)\\
 & =\sum_{m=0}^{3}\widetilde{F_{2,m}}(u,v,w)
\end{align*}
where $\widetilde{C_{2,m}},\widetilde{D_{2,m}},\widetilde{E_{2,m}}$
and $\widetilde{F_{2,m}}$ are $m$-th order differential operators.
From the above calculation and (\ref{eq:2.9}), we have 
\begin{equation}
P_{2}P_{1}(x,D)v=\sum_{m=0}^{3}\widehat{E_{2,m}}(u,v,w),\label{eq:2.11}
\end{equation}
where $\widehat{E_{2,m}}$ are $m$-th order differential operators.
Similarly, for $P_{2}\left(\sum_{jk}\nabla(t_{jk})\times\partial_{jk}^{2}u\right)$,
it is easy too see that 
\[
\Delta(\sum_{jk}\nabla(t_{jk})\times\partial_{jk}^{2}u)=\widetilde{A_{3,3}}(u,v,w)+\widetilde{A_{3,2}}(u),
\]
where $\widetilde{A_{3,m}}$ is an $m$-th order differential operator.
Similarly, for $R(\sum_{jk}\nabla(t_{jk})\times\partial_{jk}^{2}u)$,
component-wise, we have
\begin{align*}
 & \left[R(\sum_{jk}\nabla(t_{jk})\times\partial_{jk}^{2}u)\right]_{i}\\
 & =\sum_{\ell m}t_{\ell m}\partial_{\ell m}^{2}(\sum_{jk}\nabla(t_{jk})\times\partial_{jk}^{2}u)_{i}\\
 & =\sum_{\ell m}\sum_{jk}(\nabla(t_{jk})\times t_{\ell m}\partial_{\ell m}^{2}\partial_{jk}^{2}u)_{i}+\widetilde{B_{3,3}}(u)+\widetilde{B_{3,2}}(u)\\
 & =\sum_{jk}(\nabla(t_{jk})\times\partial_{jk}^{2}(\sum_{\ell m}t_{\ell m}\partial_{\ell m}^{2}u)_{i})+\widetilde{C_{3,3}}(u)+\widetilde{C_{3,2}}(u)\\
 & =\sum_{jk}(\nabla(t_{jk})\times\partial_{jk}^{2}Ru)_{i}+\widetilde{D_{3,3}}(u)+\widetilde{D_{3,2}}(u)
\end{align*}
and use (\ref{eq:2.2.}) again, we obtain

\begin{align*}
 & \left[R(\sum_{jk}\nabla(t_{jk})\times\partial_{jk}^{2}u)\right]_{i}\\
 & =\sum_{jk}\left(\nabla(t_{jk})\times\partial_{jk}^{2}[-\mu(\nabla v-\nabla\times w)]\right)_{i}+\sum_{m=0}^{3}\widetilde{E_{3,m}}(u,v)\\
 & =\sum_{m=0}^{3}\widetilde{F_{3,m}}(u,v,w),
\end{align*}
where $\widetilde{B_{3,m}},\widetilde{C_{3,m}},\widetilde{D_{3,m}},\widetilde{E_{3,m}}$
and $\widetilde{F_{3,m}}$ are $m$-th order differential operators.

Therefore, we transform the equation (\ref{eq:2.7}) into 
\begin{equation}
P_{2}P_{1}(x,D)w=\sum_{m=0}^{3}\widehat{E_{3,m}}(u,v,w),\label{eq:2.12}
\end{equation}
where $\widehat{E_{3,m}}$ are $m$-th order differential operators.
From (\ref{eq:2.11}), (\ref{eq:2.12}) and (\ref{eq:2.9}), we can
obtain 
\[
P_{2}P_{1}(x,D)\left(\begin{array}{c}
u\\
v\\
w
\end{array}\right)=\sum_{m=0}^{3}\left(\begin{array}{c}
\widehat{E_{1,m}}(u,v,w)\\
\widehat{E_{2,m}}(u,v,w)\\
\widehat{E_{3,m}}(u,v,w)
\end{array}\right),
\]
with $\widehat{E_{\ell,m}}$ are $m$-th order differential operators,
or equivalently, 
\begin{equation}
P_{2}P_{1}U=\sum_{m=0}^{3}\widehat{E_{m}}(U),\label{eq:2.13}
\end{equation}
with $\widehat{E_{m}}=(\widehat{E_{3,m}},\widehat{E_{3,m}},\widehat{E_{3,m}})^{t}$
is an $m$-th order differential operator and $U=(u,v,w)^{t}$, which
means this fourth-order differential equation has the same leading
term $P_{2}P_{1}$ and all coefficients of (\ref{eq:2.13}) lie in
$G^{s}$. Moreover, use the elliptic regularity for (\ref{eq:2.13})
with Gevrey coefficients, then $U\in G^{s}$ by Proposition 2.13 in
\cite{boutet1967pseudo}.

\section{The asymptotic behavior of $u$ near 0}

As in Section 2, we set $U=(u,v,w)^{t}$, where $v=\nabla\cdot u$
and $w=\nabla\times u$. If we can prove that $U$ solves (\ref{eq:2.13})
and satisfies the SUCP, then $u$ solves (\ref{eq:1.1}) and fulfills
the SUCP. In the following lemma, we describe the asymptotic behavior
of $u$ near $0$. Recall that if $u\in H_{loc}^{2}(\Omega;\mathbb{R}^{3})$,
then $u\in C^{\infty}(\Omega)$ by the standard elliptic regularity.
Thus, $\forall k\in\mathbb{N}$, we can consider $u\in H_{loc}^{k}(\Omega;\mathbb{R}^{3}$)
for arbitrary $k\in\mathbb{N}$ in the following results.
\begin{lem}
\cite{lin2004strong} Let $u$ be a solution to (\ref{eq:1.1}) and
for all $N>0$
\[
\int_{R\leq|x|\leq2R}|u|^{2}dx=O(R^{N})\mbox{ as }R\to0.
\]
Then for $|\beta|\leq2$, we have 
\[
\int_{R\leq|x|\leq2R}|R^{|\beta|}D^{\beta}u|^{2}dx=O(R^{N})\mbox{ as }R\to0.
\]
\end{lem}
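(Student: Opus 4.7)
The plan is to combine a dyadic rescaling argument with the standard interior $H^{2}$ estimate for the strongly elliptic second order system (\ref{eq:1.1}), and then to exhaust a slightly enlarged annular shell by a finite family of dyadic pieces on which the hypothesis can be applied directly.

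First I would fix a small $R>0$ and rescale by setting $\tilde u(y):=u(Ry)$ on the fixed annulus $\{1/2\le|y|\le 5/2\}$. Under this rescaling (\ref{eq:1.1}) becomes a new second order system of the form $\nabla_{y}\cdot\tilde\sigma(y)+R^{2}\kappa^{2}\tilde\rho(y)\tilde u(y)=0$, whose coefficients $\tilde t_{ij}(y)=t_{ij}(Ry)$, $\tilde\lambda(y)=\lambda(Ry)$, $\tilde\mu(y)=\mu(Ry)$, $\tilde\rho(y)=\rho(Ry)$ stay in a uniformly bounded $W^{2,\infty}$ ball for $R\le 1$, preserve the strong ellipticity conditions (\ref{eq:1.5})--(\ref{eq:1.6}), and come with a zero order term $R^{2}\kappa^{2}\tilde\rho$ that is uniformly bounded. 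The same interior $H^{2}$-regularity theorem for strongly elliptic systems already invoked in the paragraph after (\ref{eq:2.2.}) (Theorem 2.2 of \cite{giaquinta1993introduction}) therefore yields
\begin{equation*}
\|\tilde u\|_{H^{2}(\{1\le|y|\le 2\})}\;\le\; C\,\|\tilde u\|_{L^{2}(\{1/2\le|y|\le 5/2\})},
\end{equation*}
with the constant $C$ independent of $R$ because the domain and all relevant coefficient bounds are $R$-independent.

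Next I would undo the rescaling via the change of variables $x=Ry$. The Jacobian gives a factor $R^{3}$, while each derivative $D^{\beta}_{y}\tilde u(y)=R^{|\beta|}(D^{\beta}_{x}u)(Ry)$ contributes a factor $R^{2|\beta|}$, so the estimate above translates into
\begin{equation*}
\int_{R\le|x|\le 2R}\bigl|R^{|\beta|}D^{\beta}u\bigr|^{2}dx\;\le\; C\int_{R/2\le|x|\le 5R/2}|u|^{2}\,dx
\end{equation*}
for every multi-index $\beta$ with $|\beta|\le 2$. To finish, I would cover the enlarged shell $\{R/2\le|x|\le 5R/2\}$ by the three dyadic pieces $\{R/2\le|x|\le R\}$, $\{R\le|x|\le 2R\}$, $\{2R\le|x|\le 4R\}$, each of which is a set of the form $\{R'\le|x|\le 2R'\}$ with $R'\in\{R/2,R,2R\}$. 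Applying the hypothesis $\int_{R'\le|x|\le 2R'}|u|^{2}dx=O((R')^{N})$ on each piece and summing gives $\int_{R/2\le|x|\le 5R/2}|u|^{2}dx=O(R^{N})$ for every $N$, which combined with the displayed inequality above yields the conclusion.

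The only subtle point is the $R$-independence of the constant in the rescaled interior estimate; this is automatic here because the rescaled geometry is fixed and the rescaled coefficients remain in a bounded $W^{2,\infty}$ ball with preserved ellipticity constants. No Gevrey machinery, commutator calculation, or Carleman estimate is needed for this lemma—it is purely a scaled interior elliptic regularity statement.
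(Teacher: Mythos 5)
Your argument is correct and is essentially the content of Hörmander's Corollary 17.1.4, which is all the paper offers as a proof: that corollary is exactly the scaled interior elliptic estimate on dyadic annuli, obtained by rescaling to a fixed annulus, applying the interior $H^2$ bound for the strongly elliptic system with $R$-independent constants, and undoing the scaling. Your additional step of covering $\{R/2\le|x|\le 5R/2\}$ by three dyadic shells so the flatness hypothesis applies is the standard finishing touch and is carried out correctly.
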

\begin{proof}
The lemma was proved by the Corollary 17.1.4 in H$\ddot{\mathrm{o}}$rmander
\cite{hormander2007analysis}. 

By using the lemma 3.1, we will get the following Corollary.\end{proof}
\begin{cor}
Let $U=(u,v,w)^{t}$ with $v=\nabla\cdot u$ and $w=\nabla\times u$.
Then for $|\beta|\leq1$, $\forall N>0$, we have 
\begin{equation}
\int_{R\leq|x|\leq2R}|D^{\beta}U|^{2}dx=O(R^{N})\mbox{ as }R\to0.\label{eq:3.1}
\end{equation}

\end{cor}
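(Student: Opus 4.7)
The plan is to deduce the corollary directly from Lemma 3.1 by observing that $v$ and $w$ are first-order derivatives of $u$, so that $D^\beta v$ and $D^\beta w$ are controlled pointwise by $D^{\beta+1} u$.

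First, I would rewrite Lemma 3.1 in unweighted form. Since the factor $R^{|\beta|}$ is a constant on the annulus $R\le|x|\le 2R$, pulling it out of the integral gives
\[
\int_{R\le |x|\le 2R} |D^\beta u|^2\, dx = O(R^{N-2|\beta|}) \quad \text{as } R\to 0
\]
for every $N>0$ and every multiindex with $|\beta|\le 2$. Because $N$ is arbitrary, this is equivalent to the unweighted decay
\[
\int_{R\le |x|\le 2R} |D^\beta u|^2\, dx = O(R^N) \quad \text{as } R\to 0, \qquad |\beta|\le 2.
\]

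Next, from $v=\nabla\cdot u$ and $w=\nabla\times u$ we have the pointwise bounds $|v|^2+|w|^2\le C|\nabla u|^2$ and $|\nabla v|^2+|\nabla w|^2\le C|D^2 u|^2$ with a dimensional constant $C$. Splitting $|U|^2=|u|^2+|v|^2+|w|^2$ reduces the $|\beta|=0$ case of \eqref{eq:3.1} to the unweighted Lemma 3.1 applied to $u$ with $|\beta|=0$ and $|\beta|=1$; likewise, $|DU|^2 \le |Du|^2+|Dv|^2+|Dw|^2$ reduces the $|\beta|=1$ case to the unweighted estimates with $|\beta|=1$ and $|\beta|=2$.

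There is no serious obstacle in this argument: the $R^{|\beta|}$ weight in Lemma 3.1 is harmless because $N$ is arbitrary, and the rest consists of elementary pointwise majorizations of $v,\, w,\, \nabla v,\, \nabla w$ by derivatives of $u$.
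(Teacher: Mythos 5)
Your argument is correct and matches what the paper implicitly relies on: the paper gives no explicit proof of the corollary and simply remarks that it follows ``by using Lemma 3.1,'' and your chain of reasoning --- discarding the $R^{|\beta|}$ weight by absorbing it into the arbitrary exponent $N$, then bounding $D^\beta v$ and $D^\beta w$ pointwise by $D^{\beta+1}u$ --- is precisely the intended deduction. No gap.
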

In fact, we can get higher derivatives for $|\beta|\geq2$ in the
Corollary 3.2.
\begin{lem}
\cite{hormander2007analysis} If $U$ satisfies a fourth order strongly
elliptic system (\ref{eq:2.13}) 
\[
PU=\sum_{m=0}^{3}\widehat{E_{m}}(U),
\]
and $U$ satisfies $\forall N>0$, 
\[
\int_{R\leq|x|\leq2R}|U|^{2}dx=O(R^{N})\mbox{ as }R\to0.
\]
Then it follows that if $|\beta|\leq4$ that 
\begin{equation}
\int_{R\leq|x|\leq2R}|R^{|\beta|}D^{\beta}U|^{2}dx=O(R^{N})\mbox{ as }R\to0.\label{eq:3.2}
\end{equation}
\end{lem}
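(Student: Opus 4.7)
The plan is to deduce (\ref{eq:3.2}) from standard interior elliptic regularity applied on dyadic annuli of radius $R$, exactly in the spirit of Hörmander's Corollary~17.1.4 (which was already used for Lemma~3.1). The first step is to rewrite (\ref{eq:2.13}) in the homogeneous form $\mathcal{P}U = 0$, where
\[
\mathcal{P} := P_{2}P_{1} - \sum_{m=0}^{3}\widehat{E_{m}}
\]
is a single principally diagonal fourth-order operator. Its principal symbol is $P_{2}(x,\xi)P_{1}(x,\xi)$, which is uniformly elliptic by (\ref{eq:1.5})--(\ref{eq:1.6}), and its lower-order coefficients are uniformly bounded since they lie in $G^{s}$.

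The second step is a scaling argument. For each small $R>0$, set $\tilde{U}(y) := U(Ry)$ for $y$ in the fixed annulus $\{1/2 \leq |y|\leq 3\}$. A chain-rule computation shows that $\tilde{U}$ satisfies $\tilde{\mathcal{P}}_{R}\tilde{U}=0$, where the principal coefficients of $\tilde{\mathcal{P}}_{R}$ are $a_{\alpha}(Ry)$ and the lower-order coefficients carry an extra factor $R^{4-|\alpha|}$. Consequently, the family $\{\tilde{\mathcal{P}}_{R}\}_{R\in(0,R_{0})}$ is uniformly strongly elliptic with uniformly bounded (in fact vanishing) lower-order coefficients, so the standard interior $H^{4}$-estimate applied on the fixed annulus gives
\[
\|\tilde{U}\|_{H^{4}(1\leq|y|\leq 2)} \leq C\,\|\tilde{U}\|_{L^{2}(1/2\leq|y|\leq 3)}
\]
with a constant $C$ independent of $R$.

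The third step is to unscale, using $D_{y}^{\beta}\tilde{U}(y) = R^{|\beta|}(D^{\beta}U)(Ry)$ and the change of variables $x=Ry$, which converts the previous display into
\[
R^{2|\beta|} \int_{R\leq|x|\leq 2R} |D^{\beta}U|^{2}\,dx \leq C \int_{R/2\leq|x|\leq 3R} |U|^{2}\,dx, \qquad |\beta|\leq 4.
\]
Covering the larger annulus $\{R/2 \leq |x| \leq 3R\}$ by a bounded number of dyadic shells of the form $\{R'\leq|x|\leq 2R'\}$ with $R' \sim R$, the hypothesis $\int_{R\leq|x|\leq 2R}|U|^{2}dx = O(R^{N})$ transfers to the right-hand side and yields (\ref{eq:3.2}).

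The only subtle point is the uniform-in-$R$ control of the constant in the interior $H^{4}$-estimate for $\tilde{\mathcal{P}}_{R}$. This is a routine consequence of interior regularity for higher-order strongly elliptic systems, once the ellipticity constants and the $L^{\infty}$ norms of the coefficients are bounded uniformly in $R$, which is automatic here because the coefficients of $\mathcal{P}$ are Gevrey and $Ry$ ranges over a compact subset of $\Omega$. No new technology beyond Hörmander's interior regularity is required, which is consistent with the lemma being attributed to \cite{hormander2007analysis}.
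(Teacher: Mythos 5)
Your argument is correct and is in substance identical to the paper's: the paper simply invokes Hörmander's Corollary 17.1.4, and your scaling argument on dyadic annuli is exactly the standard proof of that corollary specialized to the fourth-order system (\ref{eq:2.13}). Rewriting the equation as $\mathcal{P}U=0$, scaling to a fixed annulus, noting that the rescaled lower-order coefficients carry factors $R^{4-|\alpha|}$, applying the uniform interior $H^{4}$ estimate, unscaling, and covering $\{R/2\leq|x|\leq 3R\}$ by three dyadic shells are all sound; the uniformity of the interior-estimate constant follows from the uniform strong ellipticity and the boundedness of the Gevrey coefficients on a compact neighborhood of $0$. You have filled in the details that the paper delegates to the citation, but the route is the same one.
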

\begin{proof}
Since $U$ satisfies (\ref{eq:2.13}), a fourth order strongly elliptic
system, by using the Corollary 17.1.4 in \cite{hormander2007analysis},
we can obtain (\ref{eq:3.2}).\end{proof}
\begin{rem}
In the section 3 of \cite{lin2004strong}, the author proved (\ref{eq:3.2})
holding for $|\beta|\leq2$. From Lemma 3.3 and the coefficients of
$P$ are in the Gevrey class $G^{s}$, we have $U\in G^{s}$ and 
\[
\int_{|x|\leq R}|D^{\beta}U|^{2}dx=O(R^{N})\mbox{ as }R\to0,
\]
for $|\beta|\leq4$ and $\forall N>0$. 
\end{rem}

\section{Proof of the main theorem}

In this section, we want to prove Theorem 1.1. If $U=(u,v,w)^{t}$
satisfies (\ref{eq:2.13}) and the SUCP, then the SUCP holds for $u$,
where $u$ fulfills (\ref{eq:1.1}).

\subsection{SUCP for $U$}

In the following theorem, we will prove the SUCP for $U$.
\begin{thm}
Suppose that the second order elliptic operators $P_{\ell}$ satisfies
(\ref{eq:1.3}), (\ref{eq:1.4}), (\ref{eq:1.5}) and (\ref{eq:1.6})
for $\ell=1,2$. $\alpha>0$ satisfies (\ref{eq:1.7}) at $x=0$ and
$s$ satisfies (\ref{eq:1.8}). Let $P=P_{2}P_{1}$ be a fourth order
elliptic operator. Then the SUCP holds for the elliptic system 
\begin{equation}
PU=\sum_{|\beta|\leq3}a_{\beta}\partial^{\beta}U\label{eq:4.1}
\end{equation}
provided the coefficients of $P_{\ell}$ are in the Gevery class $G^{s}$.\end{thm}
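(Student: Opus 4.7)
The plan is to apply the scalar Carleman estimate of Colombini--Koch for $P=P_{2}P_{1}$ component-wise, exploiting the principally diagonal structure of (\ref{eq:4.1}).

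First I would invoke from \cite{colombini2010strong} a Carleman estimate for the scalar fourth-order operator $P$ of the form
\begin{equation*}
\sum_{|\alpha|\leq 3}\tau^{2(4-|\alpha|)-1}\int e^{2\tau\phi(x)}|x|^{-2(4-|\alpha|)+2\epsilon}|\partial^{\alpha}\varphi|^{2}\,dx\leq C\int e^{2\tau\phi(x)}|P\varphi|^{2}\,dx,
\end{equation*}
valid for all $\varphi\in C_{c}^{\infty}(B(0,r_{0})\setminus\{0\})$, all $\tau\geq\tau_{0}$, and some radial weight $\phi$ adapted to the Gevrey class. The hypotheses (\ref{eq:1.5})--(\ref{eq:1.6}) give strong ellipticity of both factors, (\ref{eq:1.7}) controls their anisotropy at the origin, and (\ref{eq:1.8}) sets the Gevrey threshold that keeps the estimate stable against commuting $e^{\tau\phi}$ with $G^{s}$-coefficient derivatives of order up to four.

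Next, since the principal part of (\ref{eq:4.1}) is diagonal with entry $P$ in each of the seven slots, I would apply this scalar estimate separately to each component $U_{i}$ of $U=(u,v,w)^{t}$, after multiplication by a smooth cutoff supported in an annulus $\{r\leq|x|\leq 2r_{0}\}$. Summing over $i=1,\dots,7$ yields a weighted bound for all derivatives of $U$ of order $\leq 3$ in terms of $\int e^{2\tau\phi}|PU|^{2}\,dx$. By (\ref{eq:4.1}) the right-hand side is controlled by $C\int e^{2\tau\phi}\sum_{|\beta|\leq 3}|\partial^{\beta}U|^{2}\,dx$, and the $\tau$-gain of order $2(4-|\alpha|)-1\geq 1$ on the left permits absorption of this coupling term for $\tau$ large. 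The infinite-order vanishing of $U$ and its derivatives up to order $4$ at the origin, established in Lemma 3.3 and Remark 3.4, shows that the inner cutoff boundary contributions at $|x|=r$ decay faster than any polynomial in $r$ and can be killed by first letting $r\to 0$. Letting $\tau\to\infty$ then forces $U$ to vanish in a neighborhood of the origin; connectedness of $\Omega$, together with the standard SUCP for the second-order factors $P_{1}$ and $P_{2}$ applied to $u$ and $v$, propagates the vanishing to all of $\Omega$.

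The main obstacle is verifying that our factored operator $P=P_{2}P_{1}$ fits the hypotheses of the Colombini--Koch Carleman estimate with precisely the constants $\alpha$ and $s$ of (\ref{eq:1.7})--(\ref{eq:1.8}): the eigenvalue-spread parameter $\alpha$ determines the class of admissible radial weights $\phi$, and the Gevrey index $s<1+1/\alpha$ is exactly the threshold at which the commutator losses against $G^{s}$ coefficients remain absorbable into the $\tau$-gain. Both conditions must be checked \emph{simultaneously} for $P_{1}$ and $P_{2}$, since a common weight $\phi$ must work for the product. A secondary technical point is bookkeeping: the lower-order coefficients $a_{\beta}$ in (\ref{eq:4.1}) are, by the construction of Section 2, polynomials in $\lambda,\mu,t_{jk},\rho$ and their derivatives, hence belong to $G^{s}$ with quantitative Gevrey norms, which is what makes the absorption of $\sum_{|\beta|\leq 3}|\partial^{\beta}U|$ into the Carleman left-hand side uniform in $\tau$.
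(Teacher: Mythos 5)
There is a genuine gap in your treatment of the inner cutoff. You write that the infinite-order vanishing of $U$ and its derivatives (Lemma 3.3, Remark 3.4) gives decay ``faster than any polynomial in $r$'' at the inner boundary $|x|=r$, and that this ``can be killed by first letting $r\to 0$.'' But the Carleman weight here is $e^{2\tau|x|^{-\alpha}}$, which blows up like a stretched exponential as $|x|\to 0$ for any fixed $\tau$. Polynomial decay of arbitrary order in $r$ is therefore \emph{not} enough to make the contributions from the inner cutoff annulus vanish as $r\to 0$: the product $r^{N}\cdot e^{2\tau r^{-\alpha}}$ tends to $+\infty$ for every $N$. The paper's proof is not optional bookkeeping at this point: it first upgrades the infinite-order vanishing to stretched-exponential decay, $|U|\lesssim e^{-|x|^{-\gamma}}$ for some $\gamma>\alpha$. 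That upgrade uses two things together: $U\in G^{s}$ (from elliptic Gevrey regularity, Lemma 5.3/Proposition 2.13 of \cite{boutet1967pseudo}) and the appendix Lemma 5.1, which says a $G^{s}$ function with all derivatives vanishing at $0$ satisfies $|f(x)|\leq e^{-|x|^{-1/\rho}}$ for any $\rho>s-1$. The hypothesis (\ref{eq:1.8}), $s<1+1/\alpha$, is exactly what makes $1/(s-1)>\alpha$, so one can pick $\gamma\in(\alpha,1/(s-1))$; only then does $e^{-2|x|^{-\gamma}}\cdot e^{2\tau|x|^{-\alpha}}\to 0$ near the origin and the inner-cutoff (or, equivalently, the density argument that lets one apply the Carleman estimate directly to $\chi U$ with an outer cutoff only) go through. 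Your proposal attributes the role of (\ref{eq:1.8}) entirely to commutator bookkeeping inside the Carleman inequality, and never produces the decay rate that actually beats the weight.

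A secondary, smaller point: the paper does not have a ready-made Carleman estimate for the fourth-order scalar operator $P=P_{2}P_{1}$; it builds (\ref{eq:4.4}) by applying the second-order estimate (\ref{eq:4.3}) twice, once to $P_{2}$ (with $k=0$, $s=-2(1+\alpha)$) and once to $P_{1}$ (with $k=2$, $s=-4-\tfrac{7}{2}\alpha$), and this is why it matters that the two factors share the \emph{same} anisotropy bound $\alpha$ at $x=0$. Your ``invoke a scalar fourth-order Carleman estimate from Colombini--Koch with some radial weight $\phi$'' is vaguer than what is actually available; if you want to be self-contained you should reproduce the iteration. Also, the propagation step at the end should be via the unique continuation result of \cite{uhlmann2009unique} for the elasticity system (\ref{eq:1.1}) itself, not via SUCP for the scalar factors $P_{1},P_{2}$ applied to $u$ and $v$, since $u$ and $v$ do not individually solve uncoupled second-order equations (the right-hand sides of (\ref{eq:2.2.})--(\ref{eq:2.4}) mix $u,v,w$).
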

\begin{proof}
The proof follows from \cite{colombini2010strong} and section 1.
To prove Theorem 4.1, there are two steps. First, Gevrey regularity
of the elliptic system implies the solution $U$ of (\ref{eq:4.1})
is in the Gevrey class $G^{s}$ (see Proposition 2.13 in \cite{boutet1967pseudo}).
Use the vanishing order assumption and $U\in G^{s}$, we have 
\begin{equation}
|U|\lesssim e^{-|x|^{-\gamma}},\label{eq:4.2}
\end{equation}
near $x=0$ and for some constant $\gamma>0$ (see Appendix). Second,
we can show that (\ref{eq:4.2}) implies $U$ vanishes near $0$ by
using appropriate Carleman estimates. In addition, since $U$ vanishes
near $0$, by the results in \cite{uhlmann2009unique}, we have $U\equiv0$
in $\Omega$.
\end{proof}

\subsection{Carleman Estimates}

We are going to derive the Carleman estimates for the weight $e^{\tau|x|^{-\alpha}}$
for the fourth order elliptic operator $P=P_{2}P_{1}$ in this section.
The following Carleman estimates for the scalar case has been proven
in \cite{colombini2006strong} and \cite{colombini2010strong}. Similar
to the scalar elliptic equation, we can derive the following Carleman
estimate for the special elliptic system.
\begin{prop}
Let $P_{\ell}(x,D)=\sum_{jk}a_{jk}^{\ell}(x)\partial_{jk}^{2}$ be
a principally diagonal second order elliptic operator where $a_{jk}^{\ell}(x)\in G^{s}$
satisfies(\ref{eq:1.3}), (\ref{eq:1.4}), (\ref{eq:1.5}) and (\ref{eq:1.6})
for $\ell=1,2$. $\alpha>0$ satisfies (\ref{eq:1.7}) at $x=0$ and
$s$ satisfies (\ref{eq:1.8}). Then there exist $\tau_{0}>0$ and
$r_{0}>0$ such that for $\tau>\tau_{0}$ and for all $V\in C^{\infty}((B_{r_{0}}\backslash\{0\});\mathbb{R}^{7})$,
$\ell=1,2$, the following inequality holds:
\begin{align*}
 & \tau\int|D^{2}(|x|^{\alpha/2}e^{\tau|x|^{-\alpha}}V|^{2}dx+\tau^{3}\int|x|^{-4-3\alpha}e^{2\tau|x|^{-\alpha}}V|^{2}dx\\
 & \lesssim\int|e^{2\tau|x|^{-\alpha}}(P_{\ell}V)|^{2}dx.
\end{align*}
\end{prop}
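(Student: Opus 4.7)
The plan is to reduce the vector-valued Carleman inequality to the scalar Carleman estimate already available in the literature. The operator $P_{\ell}(x,D)=\sum_{j,k}a_{jk}^{\ell}(x)\partial_{jk}^{2}$ defined by (\ref{eq:1.3})--(\ref{eq:1.4}) is a single scalar second-order elliptic operator; when applied to a vector-valued function $V=(V_{1},\ldots,V_{7})$ it acts componentwise, so $(P_{\ell}V)_{i}=P_{\ell}V_{i}$ for each $i$. Consequently both integrands in the claimed inequality split as sums over the seven components via $|V|^{2}=\sum_{i}|V_{i}|^{2}$, $|D^{2}V|^{2}=\sum_{i}|D^{2}V_{i}|^{2}$ and $|P_{\ell}V|^{2}=\sum_{i}|P_{\ell}V_{i}|^{2}$.

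First, I would invoke the scalar Carleman estimate of Colombini--Del Santo--Grammatico \cite{colombini2010strong}. Under the ellipticity (\ref{eq:1.5}) (respectively (\ref{eq:1.6})), the eigenvalue ratio bound (\ref{eq:1.7}) at $x=0$, the Gevrey index condition (\ref{eq:1.8}), and the Gevrey regularity $a_{jk}^{\ell}\in G^{s}$ of the coefficients, their result supplies $\tau_{0},r_{0}>0$ such that for every scalar $v\in C^{\infty}(B_{r_{0}}\setminus\{0\})$,
\[
\tau\int|D^{2}(|x|^{\alpha/2}e^{\tau|x|^{-\alpha}}v)|^{2}\,dx+\tau^{3}\int|x|^{-4-3\alpha}e^{2\tau|x|^{-\alpha}}|v|^{2}\,dx\lesssim \int e^{2\tau|x|^{-\alpha}}|P_{\ell}v|^{2}\,dx.
\]
The constants $\tau_{0}$ and $r_{0}$ depend only on the coefficients $a_{jk}^{\ell}$, not on the particular $v$. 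I would then apply this inequality to each component $V_{i}$, $i=1,\ldots,7$, with the same $\tau_{0},r_{0}$, and sum the seven resulting scalar inequalities. The identities above convert the sum of the left-hand sides into the desired vector-valued left-hand side, and the sum of the right-hand sides into $\int e^{2\tau|x|^{-\alpha}}|P_{\ell}V|^{2}\,dx$, delivering the proposition.

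The substantive obstacle is therefore not in the present reduction, which is essentially formal, but in the underlying scalar estimate of \cite{colombini2010strong}. That proof proceeds by conjugating $P_{\ell}$ with the singular weight $e^{\tau|x|^{-\alpha}}$, decomposing in spherical harmonics after diagonalising the principal symbol $(a_{jk}^{\ell}(0))$ in order to exploit the eigenvalue spread condition (\ref{eq:1.7}), and then running a pseudodifferential calculus in which the Gevrey losses in the coefficients are absorbed precisely because $s<1+1/\alpha$; verifying that the threshold (\ref{eq:1.8}) is sharp for this scheme is the delicate point of their work. I would emphasise that the present proposition uses no feature of the system beyond the principally diagonal form: any coupling in the principal symbol would obstruct the componentwise application, forcing one to build a genuinely matrix-valued Carleman estimate from scratch. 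This is exactly the reason the reduction of Section 2 was engineered to produce the principally diagonal operator $P_{2}P_{1}$ rather than a general fourth-order system.
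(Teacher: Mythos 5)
Your proposal is correct and takes essentially the same route as the paper: both observe that $P_{\ell}$ acts componentwise on the vector $V$, reducing the system estimate to the scalar Carleman estimate of \cite{colombini2010strong} (proved along the lines of \cite{colombini2006strong}) applied to each of the seven components and then summed. You spell out the componentwise reduction more explicitly than the paper does, but the underlying argument is identical.
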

\begin{proof}
Since $P_{\ell}$ is the principally diagonal second order elliptic
operator for $\ell=1,2$, we can directly follow the consequences
in \cite{colombini2010strong} and use the proof in \cite{colombini2006strong}.
For more details and classical results, we refer readers to \cite{hormander1963linear,tataru1999carleman}.
\end{proof}
By using the integration by parts, we can get a stronger inequality
in the following. For more details, we refer readers to \cite{colombini2010strong}
and section 3, then we have
\begin{align*}
 & \sum_{j=0}^{2}\tau^{3-2j}\int e^{2\tau|x|^{-\alpha}}|x|^{\alpha}|x|^{2(j-2)(1+\alpha)}|D^{j}V|^{2}dx\\
 & \lesssim\int|e^{2\tau|x|^{-\alpha}}|P_{\ell}V|^{2}dx,
\end{align*}
with $P_{\ell}$ satisfying all the assumptions in Proposition 4.2
for $\ell=1,2$. Note that the right hand side of (\ref{eq:2.3})
and (\ref{eq:2.4}) involve second order derivatives of $u$, we cannot
apply the Carleman estimates for the second order differential systems
directly to get the SUCP for $U$. Since we have transformed (\ref{eq:1.1})
into a special fourth order elliptic system with the same leading
operator, see (\ref{eq:2.13}), then we can derive the Carleman estimates
for the operator $P=P_{2}P_{1}$.
\begin{cor}
\cite{colombini2010strong} Let 
\[
A=\sum_{jk}a_{jk}(x)\partial_{x_{j}x_{k}}^{2}
\]
be a second order strongly elliptic operator with $a_{ij}$ in the
Gevrey class $G^{s}$. Suppose $\alpha>0$ satisfying (\ref{eq:1.7})
at $x=0$. Then there exists $\tau_{0}$ such that for all $|s|$,
$k\leq\nu$ and $\tau\geq\tau_{0}$
\begin{align}
 & \sum_{j=0}^{k+2}\tau^{3-2j}\int|x|^{\alpha+2j(1+\alpha)}|x|^{2s}e^{2\tau|x|^{-\alpha}}|D^{j}V|^{2}dx\label{eq:4.3}\\
 & \lesssim\sum_{j=0}^{k}\tau^{-2j}\int|x|^{2(2+j)(1+\alpha)}|x|^{2s}e^{2\tau|x|^{-\alpha}}|D^{j}(AV)|^{2}dx.\nonumber 
\end{align}
 \end{cor}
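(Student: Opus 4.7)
The plan is to derive (\ref{eq:4.3}) by iterating Proposition 4.2 in a scheme that tracks the shift parameter $s$, combined with commutator estimates that are kept under control by the Gevrey hypothesis (\ref{eq:1.8}). First I would record a version of Proposition 4.2 with an arbitrary weight shift $s'\in\mathbb{R}$, which is the form actually proved in \cite{colombini2006strong}: for $\tau\geq\tau_{0}$ and any smooth $V$ supported near (but away from) the origin,
\begin{equation*}
\sum_{j=0}^{2}\tau^{3-2j}\int|x|^{\alpha+2(j-2)(1+\alpha)+2s'}e^{2\tau|x|^{-\alpha}}|D^{j}V|^{2}dx \lesssim \int|x|^{2s'}e^{2\tau|x|^{-\alpha}}|AV|^{2}dx.
\end{equation*}
The choice $s'=s+2(1+\alpha)$ immediately gives the $k=0$ instance of (\ref{eq:4.3}).

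For $k\geq 1$, I would proceed by induction, applying this shifted base estimate to $D^{\beta}V$ for every multi-index $\beta$ with $|\beta|\leq k$, taking the specific shift $s_{\beta}=s+(|\beta|+2)(1+\alpha)$, and then multiplying the whole inequality by $\tau^{-2|\beta|}$. A direct matching of $|x|$- and $\tau$-powers shows that the LHS contribution from $\beta$ reads
\begin{equation*}
\sum_{j=0}^{2}\tau^{3-2(j+|\beta|)}\int|x|^{\alpha+2(j+|\beta|)(1+\alpha)+2s}e^{2\tau|x|^{-\alpha}}|D^{j+\beta}V|^{2}dx,
\end{equation*}
so that summing over $|\beta|\leq k$ and reindexing $i=j+|\beta|$ reproduces, up to combinatorial constants depending on $\nu$, the full LHS of (\ref{eq:4.3}). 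The corresponding right-hand side is a sum of terms of the form $\tau^{-2|\beta|}\int|x|^{2(|\beta|+2)(1+\alpha)+2s}e^{2\tau|x|^{-\alpha}}|AD^{\beta}V|^{2}dx$, which I would split via $AD^{\beta}V=D^{\beta}(AV)+[A,D^{\beta}]V$.

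The first piece, summed over $|\beta|\leq k$, reconstructs exactly the right-hand side of (\ref{eq:4.3}). The hard step — and the sole reason the constraint (\ref{eq:1.8}) appears — is to absorb the commutator contributions into the left-hand side. The commutator $[A,D^{\beta}]$ is a differential operator of order at most $|\beta|+1$ whose coefficients are derivatives $\partial^{\beta''}a_{jk}$ with $1\leq|\beta''|\leq|\beta|$; the Gevrey bound gives $|\partial^{\beta''}a_{jk}|\leq cA^{|\beta''|}|\beta''|!^{s}$. One therefore has to show that each resulting term $\tau^{-2|\beta|}|\partial^{\beta''}a_{jk}|^{2}|D^{|\beta|-|\beta''|+2}V|^{2}$, appearing with the weights above, is dominated by the matching LHS term of index $j=|\beta|-|\beta''|+2\leq k+1$. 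This is where the Carleman weight $e^{\tau|x|^{-\alpha}}$ interacts crucially with $s$: the natural scale $|x|\sim\tau^{-1/(1+\alpha)}$ of the weight converts the factorial $|\beta''|!^{s}$ into a power of $\tau$, and the inequality $s<1+1/\alpha$ is exactly what guarantees that this power is beaten by the $\tau^{3-2j}$ budget on the LHS, provided $\tau\geq\tau_{0}(\nu)$ is chosen large enough for the finite range $k\leq\nu$ under consideration. This absorption is the technical heart of \cite{colombini2006strong,colombini2010strong}; once carried out it closes the induction and yields (\ref{eq:4.3}).
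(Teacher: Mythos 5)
Your reconstruction fills in the iteration that the paper merely indicates by citing \cite{colombini2010strong} and ``the induction hypothesis'': apply the shifted second-order estimate to each $D^\beta V$ with $|\beta|\leq k$, multiply by $\tau^{-2|\beta|}$, split $AD^\beta V=D^\beta(AV)+[A,D^\beta]V$, and absorb the commutator into the left side. The bookkeeping of $|x|$- and $\tau$-exponents checks out (with $s_\beta=s+(|\beta|+2)(1+\alpha)$ the substitution collapses cleanly to (\ref{eq:4.3})), and this is indeed the scheme used in the source paper, so the approach is essentially the paper's.

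Where you overstate matters is the role of (\ref{eq:1.8}) in the commutator absorption. For the corollary as written, with $k\leq\nu$ and $\nu$ fixed finite, absorption is elementary and does not see the Gevrey order at all: the commutator piece at $j'=|\beta|-|\beta''|+2$ is dominated by the matching LHS term with ratio
\begin{equation*}
\tau^{1-2|\beta''|}\,|x|^{2(1+\alpha)|\beta''|-\alpha}\,|\partial^{\beta''}a_{jk}|^2,
\end{equation*}
and since $|\beta''|\geq 1$ the $\tau$-exponent is $\leq -1$ and the $|x|$-exponent is $\geq 2+\alpha>0$, while $|\partial^{\beta''}a_{jk}|$ is a constant depending only on the data and $\nu$; choosing $\tau_0=\tau_0(\nu)$ and $r_0$ small closes the argument without invoking $s<1+1/\alpha$. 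That constraint earns its keep elsewhere: in Lemmas 5.1--5.2, to ensure a $G^s$-flat solution decays as $e^{-|x|^{-\gamma}}$ with $\gamma>\alpha$ (so that (\ref{eq:4.2}) is compatible with the weight $e^{\tau|x|^{-\alpha}}$ when the estimate is applied to $\chi U$), and in the refinement of \cite{colombini2010strong} where $\nu$ is taken to grow with $\tau$, at which point the factorial in the Gevrey bound genuinely has to be defeated by the $\tau$-budget. Your proof is correct as a proof of the stated corollary, but the gloss that (\ref{eq:1.8}) is ``exactly what guarantees'' the absorption attaches the hypothesis to the wrong step.
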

\begin{proof}
See \cite{colombini2010strong} and section 3. We can use the induction
hypothesis to prove the Corollary 4.3.
\end{proof}
For the fourth order elliptic operator $PU=P_{2}P_{1}U$ is the product
of two second order elliptic operators which satisfies (\ref{eq:2.13}),
where $U=(u,v,w)^{t}$ and $P_{\ell}(x,D)U=\sum_{jk}a_{jk}^{\ell}(x)\partial_{jk}^{2}U$.
Recall that $a_{jk}^{\ell}\in G^{s}$ and $\alpha>0$ satisfying (\ref{eq:1.7})
uniformly in $x$ and for $\ell=1,2$. Apply the Corollary 4.2 iteratively,
then we have
\begin{align}
 & \sum_{j=0}^{4}\tau^{6-2j}\int|x|^{-8-6\alpha}|x|^{2j(1+\alpha)}e^{2\tau|x|^{-\alpha}}|D^{j}V|^{2}dx\label{eq:4.4}\\
 & \lesssim\sum_{j=9}^{2}\tau^{3-2j}\int|x|^{-4-3\alpha}|x|^{j(1+\alpha)}e^{2\tau|x|^{-\alpha}}|D^{j}(P_{1}V)|^{2}dx\nonumber \\
 & \lesssim\int e^{2\tau|x|^{-\alpha}}|(P_{2}P_{1}V)|^{2}dx=\int e^{2\tau|x|^{-\alpha}}|PV|^{2}dx,\nonumber 
\end{align}
where the first inequality is obtained by (\ref{eq:4.3}) with $k=2$,
$s=-4-\dfrac{7}{2}\alpha$ and the second inequality is obtained by
(\ref{eq:4.3}) with $k=0$, $s=-2(1+\alpha)$.

Now, we want to prove the SUCP for (\ref{eq:1.1}). Here we prove
the theorem 2.2.\\
\textit{Proof of Theorem 4.1}: The operator $P=P_{2}P_{1}$ is strongly
elliptic in the Gevrey class $G^{s}$, then $U$ is also in the Gevrey
class $G^{s}$. Therefore, we have the vanishing of infinite order
implies that 
\[
|u|\lesssim e^{-|x|^{-\gamma}}
\]
for some $\gamma>\alpha$. Let $\chi\in C_{0}^{\infty}(\mathbb{R}^{3})$
be such that $\chi\equiv1$ for $|x|\leq R$ and $\chi\equiv0$ for
$|x|\geq2R$ ($R>0$ is small enough. Then we can apply (\ref{eq:4.4})
to the function $\chi U$, which means 
\begin{align}
 & C\sum_{|\beta|=0}^{4}\tau^{6-2|\beta|}\int_{|x|<R}|x|^{(2|\beta|-6)(1+\alpha)-2}e^{2\tau|x|^{-\alpha}}|D^{\beta}U|^{2}dx\label{eq:4.5}\\
 & \leq\int e^{2\tau|x|^{-\alpha}}|PU|^{2}dx\nonumber \\
 & \leq\int_{|x|<R}e^{2\tau|x|^{-\alpha}}|PU|^{2}dx+\int_{|x|>R}e^{2\tau|x|^{-\alpha}}|P(\chi U)|^{2}\nonumber \\
 & \leq\int_{|x|<R}e^{2\tau|x|^{-\alpha}}|\sum_{m=0}^{3}\widehat{E_{m}}(U)|^{2}dx+\int_{|x|>R}e^{2\tau|x|^{-\alpha}}|P(\chi U)|^{2},\nonumber 
\end{align}
by using the reduction elliptic system (\ref{eq:2.13}). 

If $\tau$ is large and $R$ is sufficiently small, then (\ref{eq:4.5})
implies 
\begin{align}
 & C\sum_{|\beta|=0}^{4}\tau^{6-2|\beta|}\int_{|x|<R}|x|^{(2|\beta|-6)(1+\alpha)-2}e^{2\tau|x|^{-\alpha}}|D^{\beta}U|^{2}dx\label{eq:4.6}\\
 & \leq\int_{|x|>R}e^{2\tau|x|^{-\alpha}}|P(\chi U)|^{2},\nonumber 
\end{align}
for some constant $C>0$. Notice that $e^{\tau|x|^{-\alpha}}\geq e^{\tau R^{-\alpha}}$
for $|x|<R$ and $e^{\tau|x|^{-\alpha}}\leq e^{\tau R^{-\alpha}}$
for $|x|>R$. Therefore, we can use (\ref{eq:4.6}) to obtain
\begin{align*}
 & C\sum_{|\beta|=0}^{4}\tau^{6-2|\beta|}\int_{|x|<R}|x|^{(2|\beta|-6)(1+\alpha)-2}|D^{\beta}U|^{2}dx\\
 & \leq\int_{|x|>R}|P(\chi U)|^{2}.
\end{align*}
Let $\tau\to\infty$, we get $U=0$ in $\{|x|<R\}$ for $R$ small,
which implies $u=0$ in $\{|x|<R\}$. Furthermore, by using the unique
continuation principal in \cite{uhlmann2009unique}, we can obtain
$u\equiv0$ in $\Omega$, then we are done.

\section{Appendix}

In this section, we state some properties of Gevrey functions. For
more details, see \cite{lerner1981resultats,boutet1967pseudo}.
\begin{lem}
Let $U$ be a bounded open set and suppose that $0\in U$, $s\geq1$
and $f\in G^{s}(U)$ satisfies 
\[
\partial^{\beta}f(0)=0
\]
for all multiindices $\beta$. Let $s-1<\rho$, then 
\[
|f(x)|\leq e^{-|x|^{-1/\rho}}
\]
near $x=0$.
\end{lem}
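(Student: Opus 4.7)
The plan is to exploit the vanishing of all derivatives of $f$ at $0$ via Taylor's formula with integral remainder, insert the Gevrey bound on the derivatives, and optimize the remainder order $N$ in terms of $|x|$. Concretely, since $\partial^{\beta}f(0)=0$ for every multiindex $\beta$, Taylor's formula collapses $f(x)$ to its $N$-th order integral remainder
\[
f(x)=N\sum_{|\beta|=N}\frac{x^{\beta}}{\beta!}\int_{0}^{1}(1-t)^{N-1}\,\partial^{\beta}f(tx)\,dt,
\]
valid for every positive integer $N$ on a star-shaped neighborhood of $0$ contained in $U$. Inserting the Gevrey bound $|\partial^{\beta}f|\leq cA^{|\beta|}(|\beta|!)^{s}$ and applying the multinomial identity $\sum_{|\beta|=N}1/\beta!=n^{N}/N!$, where $n$ is the ambient dimension, will give $|f(x)|\leq c\,(nA|x|)^{N}(N!)^{s-1}$.

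Next, Stirling's estimate $(N!)^{s-1}\leq C_{0}^{N}N^{N(s-1)}$ allows the bound to be rewritten as $|f(x)|\leq c\exp\bigl(N\log(K|x|)+(s-1)N\log N\bigr)$ for a suitable $K>0$. I then minimize the exponent over $N$ treated as a real parameter: setting its derivative in $N$ to zero gives critical point $N\approx c_{0}|x|^{-1/(s-1)}$, and substituting the nearest positive integer yields
\[
|f(x)|\leq C\exp\bigl(-c_{1}|x|^{-1/(s-1)}\bigr)
\]
for $|x|$ small, with constants $C,c_{1}>0$ depending only on $c$, $A$, $n$, and $s$.

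Finally, the strict inequality $\rho>s-1$ provides exactly the slack needed to absorb the constants: since $1/\rho<1/(s-1)$, for $|x|$ sufficiently close to $0$ one has $|x|^{-1/\rho}\leq c_{1}|x|^{-1/(s-1)}-\log C$, and hence $|f(x)|\leq e^{-|x|^{-1/\rho}}$ near $x=0$. The main delicate step is the $N$-optimization, which must be carried out with $N$ an integer, and the bookkeeping of the constants depends crucially on the strict gap $\rho-(s-1)>0$; the limiting case $s=1$ is immediate, since then $f$ is real-analytic and the vanishing hypothesis forces $f\equiv0$ in a neighborhood of $0$.
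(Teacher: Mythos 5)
Your proof is correct. Note that the paper itself does not prove this lemma at all---it is stated in the Appendix with a pointer to \cite{lerner1981resultats,boutet1967pseudo}---so there is no in-paper argument to compare against; your derivation supplies the missing details via the standard route. The steps are sound: Taylor with integral remainder collapses $f$ to the $N$-th remainder because all derivatives vanish at $0$; the Gevrey bound together with $\sum_{|\beta|=N}1/\beta!=n^{N}/N!$ gives $|f(x)|\leq c(nA|x|)^{N}(N!)^{s-1}$; optimizing over $N$ (where the critical point $N\sim c_{0}|x|^{-1/(s-1)}$ yields exponent $-(s-1)N$, so $|f(x)|\lesssim\exp(-c_{1}|x|^{-1/(s-1)})$); and the strict inequality $\rho>s-1$, hence $1/\rho<1/(s-1)$, absorbs the multiplicative constant for small $|x|$. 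The only step worth stating a bit more carefully is the passage from the real critical point to an integer $N$: replacing $N$ by $\lceil N\rceil$ changes the exponent by an amount bounded by $|\log(K|x|)|+(s-1)(\log N+1)$, which is $o(|x|^{-1/(s-1)})$ and hence harmless. Your separate treatment of $s=1$ (real-analytic, so $f\equiv0$ near $0$) is also correct and is needed, since the formula $N\sim|x|^{-1/(s-1)}$ degenerates there.
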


\begin{lem}
We have 
\[
e^{-|x|^{-1/\rho}}\in G^{s}(\mathbb{R}^{3})
\]
provided $1+\rho=s$.
\end{lem}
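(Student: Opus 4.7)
The plan is to extend $f(x):=e^{-|x|^{-1/\rho}}$ to a holomorphic function of several complex variables on a domain in $\mathbb{C}^{3}\setminus\{0\}$, and then apply the multivariate Cauchy estimates, balancing the polydisk radius against the exponential decay of $|f|$ near the origin.

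First, I would set $g(z):=z_{1}^{2}+z_{2}^{2}+z_{3}^{2}$ for $z\in\mathbb{C}^{3}$, so that $g(x)=|x|^{2}$ on $\mathbb{R}^{3}$. For $x\ne 0$ and $z$ in the polydisk $\{|z_{i}-x_{i}|\le\epsilon|x|\}$, a direct expansion gives $|g(z)-|x|^{2}|\le(2\sqrt{3}\epsilon+3\epsilon^{2})|x|^{2}$, so choosing $\epsilon=\epsilon(\rho)>0$ small enough forces $g(z)$ into a narrow sector $\{|\arg w|<\pi\rho\}$ around the positive real axis. The principal branch of $g(z)^{-1/(2\rho)}$ is then holomorphic on this polydisk and its real part is bounded below by $c|x|^{-1/\rho}$ for some $c=c(\epsilon,\rho)>0$. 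Defining
\[
F(z):=\exp\bigl(-g(z)^{-1/(2\rho)}\bigr)
\]
yields a holomorphic extension of $f$ with $|F(z)|\le e^{-c|x|^{-1/\rho}}$ on the polydisk, and the multivariate Cauchy formula gives
\[
|\partial^{\beta}f(x)|\;\le\;\frac{\beta!}{(\epsilon|x|)^{|\beta|}}\,e^{-c|x|^{-1/\rho}}.
\]

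The heart of the proof is then to optimize this bound over $|x|$. Substituting $y=|x|^{-1/\rho}$ converts the right-hand side into a constant multiple of $\beta!\,y^{\rho|\beta|}e^{-cy}$, which peaks at $y=\rho|\beta|/c$ with maximum of order $A_{0}^{|\beta|}|\beta|!^{\rho}$ after Stirling. Combined with $\beta!\le|\beta|!$ this produces
\[
|\partial^{\beta}f(x)|\;\le\;c_{0}A_{0}^{|\beta|}|\beta|!^{1+\rho}
\]
uniformly in $x\in\mathbb{R}^{3}\setminus\{0\}$, which is precisely the Gevrey bound of order $s=1+\rho$. The same estimate forces $\partial^{\beta}f(0)=0$ for every $\beta$, so the bound extends across the origin and $f\in G^{1+\rho}(\mathbb{R}^{3})$.

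The only genuinely delicate step is the optimization: the scaling $|x|^{-1/\rho}\sim|\beta|$ is exactly what balances the Cauchy factor $|x|^{-|\beta|}$ against the exponential decay of $F$, and it is this balance, together with Stirling, that pins the Gevrey index at precisely $1+\rho$. Arranging for the principal branch of $g^{-1/(2\rho)}$ to exist on the polydisk is routine provided $\epsilon$ is chosen small enough depending on $\rho$.
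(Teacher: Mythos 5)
Your proof is correct. The paper itself gives no proof of this lemma --- the Appendix merely states it and refers the reader to Lerner and Boutet de Monvel for ``properties of Gevrey functions'' --- so there is no argument in the text to compare against. Your route (complexify $|x|^{2}$ as $g(z)=z_{1}^{2}+z_{2}^{2}+z_{3}^{2}$, take the principal branch of $g^{-1/(2\rho)}$ on a polydisk of radius $\epsilon|x|$ where $\arg g$ stays small, apply the multivariate Cauchy estimate, then optimize $(\epsilon|x|)^{-|\beta|}e^{-c|x|^{-1/\rho}}$ over $|x|$) is the standard way to establish such Gevrey bounds, and your Stirling computation at the critical scale $|x|^{-1/\rho}\sim|\beta|$ correctly yields the index $1+\rho$. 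The bound $\beta!\le|\beta|!$ is also used appropriately. One small point worth spelling out: the sentence asserting that ``the same estimate forces $\partial^{\beta}f(0)=0$'' compresses a step. What you actually need is that each $\partial^{\beta}f$ extends continuously to the origin with limit $0$; this follows from the \emph{pre-optimization} Cauchy bound $|\partial^{\beta}f(x)|\le\beta!(\epsilon|x|)^{-|\beta|}e^{-c|x|^{-1/\rho}}$, since the exponential beats any negative power of $|x|$, and then a routine induction on $|\beta|$ (mean value theorem in each variable) shows $f\in C^{\infty}(\mathbb{R}^{3})$ with all derivatives vanishing at $0$. With that remark added, the argument is complete.
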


\begin{lem}
Let 
\[
P(x,D)u=f\mbox{ in }U
\]
 be an elliptic differential system with coefficients and right had
side in the Gevrey class $G^{s}(U)$. Then $u\in G^{s}(V)$ for all
bounded $V\Subset U$.\end{lem}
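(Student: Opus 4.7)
The plan is to prove that any solution $u$ to an elliptic system $P(x,D)u=f$ with $G^{s}$ coefficients and $G^{s}$ right-hand side automatically belongs to $G^{s}$ locally. The first step is purely qualitative: since $P$ is elliptic and its coefficients and $f$ are smooth, standard interior elliptic regularity (bootstrapping in Sobolev scales) already yields $u\in C^{\infty}(U)$. So the content of the lemma is the quantitative factorial growth rate $|\partial^{\beta}u|\leq cA^{|\beta|}|\beta|!^{s}$ on compact subsets $V\Subset U$. The approach is to fix a pair of nested open sets $V\Subset V'\Subset U$, together with a smooth cutoff family, and to prove by induction on $|\beta|=N$ the nested Sobolev estimate
\[
\bigl\|\partial^{\beta}u\bigr\|_{L^{2}(V_{N})}\leq c\,A^{N}N!^{s},
\]
where $V_{N}$ is a shrinking sequence of neighborhoods between $V$ and $V'$.

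The inductive step rests on the classical a priori interior estimate for the elliptic system $P$: for any $\varphi\in C^{\infty}_{c}(V')$,
\[
\|\varphi w\|_{H^{m}}\lesssim \|P(\varphi w)\|_{L^{2}}+\|\varphi w\|_{L^{2}},
\]
where $m$ is the order of $P$ (so $m=2$ in our scalar analog, $m=4$ for the product $P_{2}P_{1}$ arising in our problem). Applied with $w=\partial^{\beta}u$, this reduces the task to estimating
\[
P\,\partial^{\beta}u=\partial^{\beta}f+[P,\partial^{\beta}]u.
\]
By the Leibniz rule the commutator $[P,\partial^{\beta}]u$ is a sum over $\gamma<\beta$ of terms of the form $C_{\gamma}(\partial^{\beta-\gamma}a)\,\partial^{\gamma+\text{(order}\le m)}u$, where $a$ is a coefficient of $P$. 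Each $\partial^{\beta-\gamma}a$ obeys the Gevrey bound $cA^{|\beta-\gamma|}(\beta-\gamma)!^{s}$ because the coefficients lie in $G^{s}$, and each $\partial^{\gamma+\cdots}u$ is handled by the induction hypothesis.

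The main obstacle is combinatorial: one must check that summing the Leibniz terms and absorbing the commutator in the elliptic estimate reproduces the target bound $A^{N}N!^{s}$ with the same constants $c$ and $A$ (up to enlargement). This is carried out by the standard Gevrey lemma, which states that for sequences $M_{N}=A^{N}N!^{s}$ with $s\geq 1$,
\[
\sum_{\gamma<\beta}\binom{\beta}{\gamma}M_{|\beta-\gamma|}M_{|\gamma|+m}\;\lesssim\;A^{\prime\,N}N!^{s}
\]
after shrinking the cutoff neighborhood slightly, where the shrinkage absorbs the factor $N^{m}$ coming from the $m$ extra derivatives of $u$ and the constant from the elliptic a priori estimate. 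Choosing $A$ large enough relative to the Gevrey constants of the coefficients and of $f$, and the shrinkage of $V_{N}\to V$ so that $\sum_{N}\text{dist}(V_{N},V_{N+1})^{-1}$ converges in the geometric rate $A^{N}$, closes the induction. This is the computation carried out in Boutet de Monvel–Krée \cite{boutet1967pseudo}; once done for a single elliptic operator of order $m$, it applies verbatim to our system $P=P_{2}P_{1}$, since that system is principally diagonal and thus inherits componentwise the Gevrey a priori estimate.
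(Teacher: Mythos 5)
Your proposal is correct and ends where the paper's own proof begins and ends: both defer to Proposition~2.13 of Boutet de Monvel--Kr\'ee \cite{boutet1967pseudo}, the standard Gevrey elliptic-regularity result. The sketch you supply---interior a~priori estimate on nested shrinking domains, Leibniz expansion of the commutator $[P,\partial^{\beta}]u$, and the combinatorial closure of the $N!^{s}$ bound---is exactly the argument that reference carries out, so there is no divergence in approach, only added (and welcome) detail; the only loose spot is the phrasing of the domain-shrinkage bookkeeping, which the cited reference makes precise.
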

\begin{proof}
See \cite{boutet1967pseudo}, Proposition 2.13, we know that the Gevrey
class are good classes of elliptic regularity.
\end{proof}
\bibliographystyle{plain}
\bibliography{ref}

\end{document}